\newtheorem{theorem}{Theorem}
\newtheorem{lemma}{Lemma}
\newtheorem{corollary}{Corollary}
\newtheorem{proposition}{Proposition}
\newtheorem{remark}{Remark}
\newtheorem{definition}{Definition}
\newtheorem{example}{Example}
\DeclareMathOperator{\intr}{int}
\DeclareMathOperator{\mmx}{max}
\DeclareMathOperator{\ddm}{dim}
\DeclareMathOperator{\argmin}{argmin}
\DeclareMathOperator{\im}{im}
\newcommand{\lang}{\langle}
\newcommand{\rang}{\rangle}
\newcommand{\R}{\mathbb R}
\newenvironment{proof}{{\noindent\bf Proof.}}{\hfill$\Box$\\}
\begin{document}

\date{}

\title{Subadditive retractions on cones and asymmetric vector norms}
\author{A. B. N\'emeth\\Faculty of Mathematics and Computer Science\\Babe\c s Bolyai University, Str. Kog\u alniceanu nr. 1-3\\RO-400084
Cluj-Napoca, Romania\\email: nemab@math.ubbcluj.ro \and S. Z. N\'emeth\\School of Mathematics, The University of Birmingham\\The Watson Building,
Edgbaston\\Birmingham B15 2TT, United Kingdom\\email: s.nemeth@bham.ac.uk}
\date{}
\maketitle
 
%\titlerunning{Characterization of latticial cones}       
%\address{The University of Birmingham\\School of Mathematics\\The Watson Building\\Edgbaston\\B15 2TT 
%Birmingham\\United Kingdom\\\email{nemeths@for.mat.bham.ac.uk}}
\maketitle

\begin{abstract}
Asymmetric vector norms are generalizations of asymmetric norms, where the subadditivity
inequality is understood in ordered vector space sense. This relation 
imposes strong conditions on the ordering itself. This note studies on these
conditions in the general case, and in the case when the asymmetric vector norm is the metric
projection onto the cone engendering the order relation.

\end{abstract}

\section{Introduction}

Let $X$ denote a real vector space.
The functional $q: X \rightarrow \R_+= [0,+\infty)$ is called an \emph{asymmetric norm}
if the following conditions hold:

\begin{definition}\label{assnor}
\begin{enumerate} 
\item $q(tx)=tq(x),\, \forall t\in \R_+,\,\,\forall x\in X$;
\item $q(x+y)\leq q(x)+q(y),\,\,\forall x,\,y \in X$;
\item If $q(x)=q(-x)=0$ then $x=0$. 
\end{enumerate}
\end{definition}

The couple $(X,q)$ will be called \emph{asymetrically normed space}. It is easy to see that item 1 in
Definition \ref{assnor} implies $q(0)=0$.

Asymmetrically normed spaces give rise to an extended literature (see e. g. the monograph of S. Cobzas \cite{Cobzas2013}
and the literature therein). The circumstance that the set
$C_q=\{x\in X: \,q(-x)=0\}$ is a cone, relates these spaces to the
ordered vector space theory. The recent paper  J Conradie \cite{Conradie2015} exploits
this relation in the context of topological vector spaces.
The relation with ordered vector space theory becomes tightened if $q$ is an operator and in the item 2
of Definition \ref{assnor} the inequality is an order relation in $X$.

This formal generalization and the fact that the positive part operator of a vector lattice
obviously satisfies the conditions in Definition \ref{assnor} leads to the theory
in vector lattices. But the definition of an asymmetric vector norm concerns general ordered
vector spaces. At this point an unexpected difficulty appears: 
the condition imposed on an asymmetric vector norm seems to impose hard conditions
on the ordering itself. A recent paper of M. Ilkan \cite{Ilkhan2020} considers
the general definition ignoring the question if such a asymmetric vector norm exists
or not in a not lattice ordered vector space.

This note studies the existence of asymmetric vector norms. We shall show that this approach opens a new 
unifying perspective on some earlier ordered vector space results raising some new exciting problems. 

%%%%%%%%%%%%%%%%%%%%%%%%%%%%%%%%%%%%%%%%%%%%%%%%%%%%%%%%%%%%%%%%%%%%%%%%%%%%%%%%%%%%%%%%%%%%%%%%%%%%%%%%%%%%

\section{Preliminaries}\label{secprel}

The nonempty set $K\subset X$ is called a \emph{convex cone} if 
\begin{enumerate}
	\item[(i)] $\lambda x\in K$, for all $x\in K$ and $\lambda \in \R_+$ and if 
	\item[(ii)] $x+y\in K$, for all $x,y\in K$. 
\end{enumerate}

A convex cone $K$ is called \emph{pointed} if $K\cap(-K)=\{0\}$. 

A convex cone is called \emph{generating} if $K-K=X$. 

If $X$ is a topological vector space then a closed, pointed generating convex cone in it is called \emph{proper}.

The \emph{relation $\leq$ defined by the pointed convex cone $K$} is given by $x\leq y$ if and only if 
$y-x\in K$. Particularly, we have $K=\{x\in X:0\leq x\}$. The relation $\leq$ is an \emph{order relation}, that 
is, it is reflexive, transitive and antisymmetric; it is \emph{translation invariant}, that is, $x+z\leq y+z$, 
$\forall x,y,z\in X$ with $x\leq y$; and it is \emph{scale invariant}, that is, $\lambda x\leq\lambda y$, 
$\forall x,y\in X$ with $x\leq y$ and $\lambda \in \R_+$.

Conversely, for every $\leq$ scale invariant, translation invariant and antisymmetric order relation in $X$ there is a pointed 
convex cone $K$, defined by $K=\{x\in X:0\leq x\}$, such that $x\leq y$ if and only if $y-x\in K$. The cone $K$ 
is called the \emph{positive cone of $X$} and $(X,\leq)$ (or $(X,K)$) is called an \emph{ordered vector space}; we use
also the notation $\leq =\leq_K.$ 

The mapping $R:(X,\leq)\to (X,\leq)$ is said to be \emph{isotone} if $x\leq y\,\Rightarrow Rx\leq Ry.$
$R$ is called \emph{subadditive} if it holds $R(x+y)\leq Rx+Ry$, for any $x,\,y\in X$.

The ordered vector space $(X,\leq)$ is called \emph{lattice ordered} if for every  $x,y\in X$ there exists 
$x\vee y:=\sup\{x,y\}$. In this case the positive cone $K$ is called a \emph{lattice cone}.
A lattice ordered vector space is called a \emph{Riesz space}. 
Denote $x^+=0\vee x$ and $x^-=0\vee (-x)$. Then, $x=x^+-x^-$, $x^+$ is 
called the \emph{positive part} of $x$ and $x^-$ is called the \emph{negative part} of $x$. 
The \emph{absolute value} of $x$ is defined by $|x|=x^++x^-$. The mapping 
$x\mapsto x^+$ is called the \emph{positive part mapping}.

If $X$ is endowed with a
vector space topology, then the continuity of the positive
part mapping is equivalent to the closedness of $K$.

The closed, pointed cone $K\subset X$ with $X$ a Banach space is called \emph{normal} 
if from $x_m\in K$, $x_m\to 0$ and
$0\leq y_m\leq x_m$ it follows $y_m\to 0.$

Every closed pointed cone in a finite dimensional Banach space is normal.

%The cone $K$ is called \emph{regular} if every
%decreasing sequence  in $K$ (and hence each increasing and order bounded sequence) in it is convergent.
%McArthur in \cite{McArthur1970} shows that every closed convex normal 
%cone in a Banach space is regular if this space does not contain any
%subspace isomorphic to the Banach space	$c_0$ of all sequences of real 
%numbers convergent to zero equipped with the supremum norm. In 
%particular, this is true for a Hilbert space. Therefore, any closed
%convex normal cone in a Hilbert space is regular. 

In the particular case of $X=H$ with $(H,\lang,\rang)$ a separable Hilbert space (with $\lang,\rang$ the scalar product),
 we shall need some further notions.
Let $K\subseteq H$ be a closed convex cone. Recall that 
$$K^*=\{x\in H:\langle x,y\rangle\geq0,\,\forall y\in K\}$$
is called the \emph{dual cone} of $K$. $K^*$ is a closed convex cone and if $K$ is generating, then $K^*$ is 
pointed (this is the case for example if $K$ is latticial).

When $(H,\lang,\rang)=(\R^m,\lang.\rang)$, the $m$-dimensional Euclidean space
the set
$$ K=\{t^1x_1+\dots+t^m x_m:\;t^i\in \R_+,\;i=1,\dots,m\}$$
with $x_1,\,\dots,\,x_m$ linearly independent vectors, is called a \emph{simplicial cone}.
A simplicial cone is closed, pointed and generating.

The relation of the notion of simplicial cones to vector lattice theory relies
an the important result of Youdine (\cite{Youdine1939}):
 
\begin{theorem}[Youdine]\label{Yu}
$(\R^m,K)$ is a vector lattice with continuous lattice operations 
if and only if $K$ is a simplicial cone. 
\end{theorem}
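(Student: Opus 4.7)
The plan is to handle the two implications separately: sufficiency is a coordinate computation, while necessity requires extracting simplicial structure from the abstract lattice hypothesis.

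For the sufficient direction, suppose $K=\cone\{x_1,\ldots,x_m\}$ with the $x_i$ linearly independent, and write each $x\in\R^m$ uniquely as $x=\sum_i\lambda_i(x)x_i$. The order defined by $K$ reads $x\leq y\Leftrightarrow\lambda_i(x)\leq\lambda_i(y)$ for all $i$, hence $x\vee y=\sum_i\max(\lambda_i(x),\lambda_i(y))x_i$. The coordinate maps $\lambda_i$ are linear (hence continuous) and the scalar $\max$ is continuous, so the lattice operations are continuous.

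For the necessary direction, I would assume $(\R^m,K)$ is a Riesz space with continuous lattice operations. First I would note that $K$ is closed (by the preliminary equivalence recalled above), pointed (by antisymmetry of $\leq_K$), and generating (via $x=x^+-x^-$); hence $K$ is a proper cone in $\R^m$ and therefore equal to the conic hull of its extreme rays. The crux is to show that distinct extreme rays are disjoint in the lattice sense. Given extremal generators $e_1,e_2$ on distinct rays, set $c:=e_1\wedge e_2\geq 0$. From $0\leq c\leq e_1$ the decomposition $e_1=c+(e_1-c)$ expresses $e_1$ as a sum of two elements of $K$, so extremality of the ray through $e_1$ forces $c=\alpha e_1$ for some $\alpha\in[0,1]$, and symmetrically $c=\beta e_2$; if $c\neq 0$ this would make $e_1$ and $e_2$ collinear, contradicting the distinctness of the rays. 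Hence $c=0$.

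To finish, I would invoke the standard Riesz space fact that pairwise disjoint nonzero positive elements are linearly independent (a consequence of the Riesz decomposition property applied repeatedly). This bounds the number of extreme rays of $K$ by $m$, represented by linearly independent generators $e_1,\ldots,e_{m'}$ with $m'\leq m$. Since $K$ has nonempty interior in $\R^m$, its extreme directions must span $\R^m$, forcing $m'=m$. Consequently $K$ is the positive hull of the basis $\{e_1,\ldots,e_m\}$, i.e.\ simplicial. I expect the lattice-disjointness step for extreme rays to be the chief obstacle; everything else is routine cone geometry and elementary Riesz arithmetic.
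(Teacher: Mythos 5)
The paper offers no proof of this theorem: it is quoted as a classical result with a citation to Youdine (1939), so there is nothing in the text to compare your argument against line by line. Judged on its own, your proposal is correct and follows the standard classical route. The sufficiency direction is exactly the expected coordinate computation in the basis $\{x_1,\dots,x_m\}$. In the necessity direction the skeleton is sound: continuity of the lattice operations gives closedness of $K$ (and this is genuinely where the continuity hypothesis enters --- the lexicographic cone in $\R^2$ is a non-closed, non-simplicial lattice cone, so the implication fails without it); Minkowski's theorem represents the closed pointed cone as the conic hull of its extreme rays; your extremality argument showing $e_1\wedge e_2=0$ for generators of distinct extreme rays is correct; and linear independence of pairwise disjoint nonzero positive elements caps the number of extreme rays at $m$, while the generating property forces it to be exactly $m$. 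Two points deserve an explicit line in a final write-up. First, the independence fact is most cleanly obtained not from the Riesz decomposition property but from the implications $u\wedge v=u\wedge w=0\Rightarrow u\wedge(v+w)=0$ and $0\le a\le b,\ a\wedge b=0\Rightarrow a=0$, applied to $|a_j|\,e_j\le\sum_{i\ne j}|a_i|\,e_i$ after taking absolute values of a vanishing linear combination; either route works, but you should say which lemma you are using. Second, you should flag that the finiteness of the set of extreme rays is itself a conclusion of the disjointness-plus-independence step and not an a priori property of proper cones (the Lorentz cone has a continuum of extreme rays), so that the final representation $K=\cone\{e_1,\dots,e_m\}$ with independent generators is seen to be earned rather than assumed.
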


(For this reason in vector lattice theory simplicial
cones sometimes are called Youdine cones as well.)

\begin{definition}\label{HVL}

The Hilbert space $H$ ordered by the order relation induced by the cone $K$
is called a \emph{Hilbert vector lattice} if (i) $K$ is a lattice cone, 
(ii) $\||x|\|=||x\|,\,\forall x\in H$, (iii) $0\leq x\leq y$ implies $\|x\|\leq\|y\|.$

\end{definition}

The cone $K$ is called \emph{self-dual}, if $K=K^*.$ If $K$
is self-dual, then it is a generating, pointed, closed convex cone.

In all that follows we shall suppose that $\R^m$ is endowed with a
Cartesian reference system with the standard unit vectors $e_1,\dots,e_m$.

The set
\[\R^m_+=\{x=(x^1,\dots,x^m)\in \R^m:\; x^i\geq 0,\;i=1,\dots,m\}\]
is called the \emph{nonnegative orthant} of the above introduced Cartesian
reference system. A direct verification shows that $\R^m_+$ is a
self-dual cone. The order relation $\leq_{\R^m_+}$ induced by $\R^m_+$ is called \emph{coordinate-wise ordering}.

The closed convex cone $K$ is called \emph{subdual} if $K\subseteq K^*$. 

The closed convex cone $K^\circ=-K^*$ is called the polar of $K$. We have 
$K^\circ=\{x\in H:\langle x,y\rangle\leq0,\,\forall y\in K\}$ and if $K$ is closed
$(K^\circ)^\circ=K$ (Farkas lemma). Therefore, the
closed convex cones $K$ and $L$ are called \emph{mutually polar} if $K=L^\circ$ (or equivalently $K^\circ=L$).

%%%%%%%%%%%%%%%%%%%%%%%%%%%%%%%%%%%%%%%%%%%%%%%%%%%%%%%%%%%%%%%%%%%%%%%%%%%%%%%%%%%%%%%%%%%%%%%%%%%%%%%

\section{New definitions}

By the \emph{image} $\im \varphi$ of a mapping $\varphi$ we mean the set of points $\varphi(x)$, where $x$ goes 
through all elements of the domain of definition of $\varphi$. 

\begin{definition}\label{vassnor}
Suppose that $K\subset X$ is a cone defining the order relation $\leq$. 
The mapping $Q:X\rightarrow K$ is an
\emph{asymmetric vector norm} (shortly aVn) if
	\begin{enumerate}
		\item $Q$ is \emph{retraction onto} $K$, i.e. $Q^2=Q\circ Q=Q$
			and $\im Q=K$;  
		\item $Q$ is \emph{positively homogeneous}, i,e,
		$Q(tx)=tQx\,\,\forall t\in \R_+,\,\forall x\in X$;
		\item $Q$ is \emph{subadditive}, that is $Q(x+y)\leq Qx+Qy,\,\,\forall \,x,\,y\in X$;
		\item $Qx=Q(-x)=0 \Rightarrow x=0$.
	\end{enumerate}
	The cone $K$ will be called \emph{the cone range} of $Q$.

If the ambient space $X$ is a topological vector space, we 
\emph{always suppose that $K$ is closed
and $Q$ is continuous.}
\end{definition}

{\bf Convention: To simplify the exposition 
we say in what follows about the operator $R$ that it is an aVn, or it is isotone  or subadditive, if the involved
order relation is with respect to the cone range $R(X)$ of $R$.}

\vspace{3mm}

From the definition of an aVn we have some direct conseguences:

\begin{remark}\label{alap}
\begin{enumerate}

\item If $Q:X\to K$ is an aVn then
the set $K^Q=\{x\in X:Qx=0\}$ is a pointed cone.
If the ambient space is a topological vector space 
then $K^Q$ is also closed.
	 
\item The vector space structure of $X$ induces in a
natural way a vector space structure in the set
of automorphisms $\mathcal A(X)$ of $X$. The presence of an
ordering in $X$ induces an ordering in the vector space $\mathcal A(X)$ too.
The set of aVn-s with the same cone range is convex.

\end{enumerate}
\end{remark}

\begin{example}\label{ska}
Suppose that $q$ is an asymmetric norm on $X$, i.e., a functional satisfying the conditions
in Definition \ref{assnor}. Suppose $x\in X$ is an element with $q(x)=1$. Let be $K=\{tx: t\in \R_+\}$.
Then $K$ is obviously a cone. Denote $\leq$ the order relation defined by $K$.

Define $Q:X\rightarrow K$ by $Q(y)= q(y)x$. Then $Q(x)=x$ and for any $y\in X$ we have
$Q^2(y)=Q(Q(y))=Q(q(y)x)=q(y)Q(x)=q(y)x=Q(y)$ that is $Q$ is a retraction onto $K$ and item 1
of the Definition \ref{vassnor} holds.

2. $Q(ty)=q(ty)x=tq(y)x=tQ(y).$

3. $Q(y+z)=q(y+z)x \leq (q(y)+q(z))x = Q(y)+Q(z).$

4.$Q(y)=Q(-y)=0 \Rightarrow q(y)x=q(-y)x=0.$ Hence $q(q(y)x)=q(q(-y)x)=q(y)q(x)=q(-y)q(x)=0 \Rightarrow q(y)=q(-y)=0$
and thus $y=0$.

Conversely, assume that $Q:X\rightarrow K\subset X$ is a vectorial asymmetric norm with $K$ an one dimensional cone. Let
$x$ be a nonzero element in $K$. Then $Q$ can be represented as $Q(y)= q(y)x,$ where $q:X\rightarrow \R_+$.
A direct checking shows that $q$ is an asymmetric norm.

\end{example}

This example shows that the notion of aVn-s is a reasonable extension of that of asymmetric norms
and that the requirement on an aVn of being a retract is natural. 

\begin{definition}
 The aVn-s with $K$ of dimension one will be called \emph{range one} aVn-s.
\end{definition}

\begin{example}\label{latt}

Let $(X,\leq)$ be a Riesz space with $K$ its positive cone. Then
using the properties of the positive part operator we can see that
$L=^+$ is an aVn.

Indeed, $^+: X \rightarrow K$ is obviously a retraction onto $K$.

$(tx)^+=tx^+\,\forall t\in \R_+,\, \forall x\in X$ and $(x+y)^+\leq x^++y^+$
from the definition of the supremum. 

If $x^+=(-x)^+ =0$, then $x,\,-x\in K$ hence $x=0$ since $K$ is pointed.

Besides the four axioms of aVn in Definition \ref{vassnor}, the positive part operator satisfies:

5.$^+$ is \emph{isotone}, i.e. $x\leq y \Rightarrow x^+\leq y^+$;

6. $I-^+= -^-$, where $I$ is the identity mapping,
it is an aVn with respect to the cone $-K$.

7. $(I-^+)^+=0$

\end{example}

\begin{definition}\label{lattaVn}
The aVn who is 
the positive part operator  is called \emph{lattice} aVn, and
to emphasize this and unify the notations we shall denote its positive part operator by $L$. 

Thus a lattice aVn $L$ satisfies the conditions of Definition \ref{vassnor} 
and the conditions 5, 6 and 7 in the above example.
\end{definition}

%%%%%%%%%%%%%%%%%%%%%%%%%%%%%%%%%%%%%%%%%%%%%%%%%%%%%%%%%%%%%%%%%%%%%%%%%%%%%%%%%%%%%%%%%%%%%%%%%%%%

\section{Proper asymmetric vector norms}

We shall be in keeping with the convention that for any operators $R,\,S,\,U$
on a vector space $(R+S)U$ means $RU+SU$. But in general $U(R+S)$ is not $UR+US$.
The equality holds for instance if $U$ is linear.

\begin{definition}\label{proper} 
The aVn $Q:X \to K$ is said \emph{proper}, if
\begin{equation}\label{pp}
 Q(I-Q)=0.
\end{equation} 

\end{definition}

For an arbitrary cone $K\subset X$ and an arbitrary retraction $R:X\to K$ we
define $K^R=\{x\in X: Rx=0\}$. We say $K^R$  to be the $R$-\emph{polar} of $K$.

(We have adopted this notation and terminology to be in keeping with that used in the
literature for similar notions. Note that $K^R$ depends on $R$, and only implicitly
on $K$ by the fact that $K$ is the $R$-s cone range.)

\begin{lemma}\label{qpro}
The range one aVn defined by
$Qy=q(y)x$ with $q$ an asymmetric norm with $q(x)=1$,
is proper if and only if

\begin{equation}\label{felt}
q(y-q(y)x)=0,\,\forall y\in X.
\end{equation}

\end{lemma}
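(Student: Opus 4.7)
The plan is to unwind Definition \ref{proper} directly using the explicit form $Qy = q(y)x$. Saying that $Q$ is proper is saying that $Q(y - Qy) = 0$ for every $y \in X$. Substituting the range-one formula and then applying $Q$ once more yields
\[
Q(y - Qy) \;=\; Q\bigl(y - q(y)x\bigr) \;=\; q\bigl(y - q(y)x\bigr)\,x,
\]
so properness of $Q$ is precisely the assertion that $q(y - q(y)x)\,x = 0$ for all $y \in X$.

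Next I would cancel the factor $x$. Since $q(x) = 1$ and $q(0) = 0$ (apply positive homogeneity with $t=0$), the vector $x$ is nonzero; hence $q(y - q(y)x)\,x = 0$ is equivalent to the scalar equality $q(y - q(y)x) = 0$, which is exactly \eqref{felt}. Reading the same chain of equalities backwards gives the converse implication, so both directions are settled simultaneously.

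The proof is essentially a one-line unpacking of definitions, and there is no serious obstacle. The only place a reader might pause is the step in which the ``outer'' $Q$ in $Q(I-Q)$ has to be evaluated on the argument $y - q(y)x$: one has to notice that the scalar $q(y)$ produced by the inner $Q$ reappears inside the second application, yielding the self-referential expression $q(y - q(y)x)$ that shows up in \eqref{felt}.
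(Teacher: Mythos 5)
Your proof is correct, and it is exactly the direct unpacking the paper intends: the lemma is stated without proof in the source, being regarded as an immediate consequence of Definition \ref{proper} applied to $Qy=q(y)x$. The one point worth making explicit, which you do, is that $x\neq 0$ (since $q(x)=1$ while $q(0)=0$), so the vector equation $q(y-q(y)x)\,x=0$ reduces to the scalar condition \eqref{felt}.
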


\begin{remark}\label{noproper}

\begin{enumerate}

\item As far as $Q$ is a retraction, $I-Q$ satisfies the relation (\ref{pp}).
Indeed
$$(I-Q)(I-(I-Q))=(I-Q)Q=Q-Q^2=0.$$
Hence, if $I-Q$ is an aVn, then it is proper.

\item From the condition 7 on the lattice aVn it follows that $L$ is proper.
It follows from 6 that $I-L$ is also a proper lattice aVn
and by 5 that they are also both isotone.

\item If we ignore condition 4 in the definition of a lattice aVn,
we can furnish simple examples of retractions satisfying conditions 1, 2, 3, 5, 6, 7
as the following example shows.

\end{enumerate}

\end{remark}

\begin{example}\label{primitiv}

Consider the two dimensional Euclidean space $R^2$ endowed with a Cartesian
reference system. Let
$K=(0,\R_+)$ and let $S:\R^2\to K$ defined by $S(u,v)=(0,v^+)$
Then a simple checking shows that $S$ satiisfies
the conditions 1, 2, 3, 5, 6, 7 in the definition of the lattice aVn.
\end{example}

\begin{proposition}\label{stand}

If $Q:X\to K$ is a aVn, then the following conditions are equivalent:

\begin{enumerate}

\item $Q$ is proper;

\item  $K^Q=\{x\in X: Q(x)=0\}= (I-Q)X$;

\item  $I-Q$ is a retraction onto the cone $K^Q$;

\end{enumerate}

In the above conditions the following obvious identities make sense:

 $(K^Q)^{I-Q} = \{x\in X: (I-Q)x=0\} = K$.

\end{proposition}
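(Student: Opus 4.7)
My plan is to close the cycle $1 \Rightarrow 2 \Rightarrow 3 \Rightarrow 1$ and then derive the final identity separately. All three implications hinge on the pointwise identity $(I-Q)x = x - Q(x)$, so no linearity of $Q$ is invoked; in particular the expression $Q((I-Q)x)$ must always be read literally and never ``distributed'' across the difference.

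For $1 \Rightarrow 2$, I pick an arbitrary $x \in X$: the hypothesis (\ref{pp}) gives $Q((I-Q)x) = 0$, hence $(I-Q)x \in K^Q$, so $(I-Q)X \subseteq K^Q$. Conversely, if $x \in K^Q$ then $Qx = 0$ and so $(I-Q)x = x - Qx = x$, placing $x$ in $(I-Q)X$; this yields the reverse inclusion. For $2 \Rightarrow 3$, the equality $\im(I-Q) = K^Q$ is exactly half of the claim that $I-Q$ is a retraction onto $K^Q$; the missing piece is idempotency, which I obtain by the pointwise computation $(I-Q)^2 x = (I-Q)x - Q((I-Q)x)$, where the last term vanishes because $(I-Q)x \in (I-Q)X = K^Q$ by hypothesis. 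Thus $(I-Q)^2 = I-Q$.

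For $3 \Rightarrow 1$, the inclusion $\im(I-Q) \subseteq K^Q$ built into condition 3 immediately gives $Q((I-Q)x) = 0$ for every $x$, which is (\ref{pp}). For the concluding identity $(K^Q)^{I-Q} = K$, I use the standard fact that any retraction $Q$ onto $K$ fixes $K$ pointwise: if $x \in K = \im Q$, write $x = Qy$ and compute $Qx = Q(Qy) = Q^2 y = Qy = x$. Consequently $(I-Q)x = 0 \iff Qx = x \iff x \in K$. I do not anticipate any substantive obstacle: the only trap is the non-linearity of $Q$, which forces every manipulation involving $I-Q$ to be performed pointwise rather than symbolically.
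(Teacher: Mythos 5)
Your proof is correct and follows essentially the same route as the paper: the same observation that $x\in K^Q$ forces $x=(I-Q)x$, and the same pointwise identity $(I-Q)^2=I-Q-Q(I-Q)$, merely reorganized as a cycle $1\Rightarrow 2\Rightarrow 3\Rightarrow 1$ instead of the paper's two separate equivalences. Your explicit verification of the concluding identity $(K^Q)^{I-Q}=K$ (via the fact that a retraction fixes its image pointwise) is a welcome addition to what the paper dismisses as obvious.
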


\begin{proof}

$1\Leftrightarrow 2.$

$(I-Q)X\subset K^Q$ by the definition of proper aVn-s.

If $x\in K^Q$, then $Qx=0$ and $x=(I-Q)x$, hence $x\in (I-Q)X$ and $K^Q\subset (I-Q)X.$

If 2 holds, then for an arbitrary $x\in X, (I-Q)x\in K^Q,$ hence $Q(I-Q)x=0.$

$1 \Leftrightarrow 3$

The equivalence is immediate from the identity $(I-Q)^2=(I-Q)(I-Q)=I-Q-Q(I-Q).$

\end{proof}

\begin{proposition}\label{prop1}

If $Q$ is a proper aVn with the cone range $K$ and $-K\subset K^Q$, then
$I-Q$ is a proper aVn too.

\end{proposition}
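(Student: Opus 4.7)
The plan is to verify, one axiom at a time, that $I-Q$ satisfies Definition \ref{vassnor} with $K^Q$ playing the role of its cone range, and then to conclude properness almost for free. Axiom 1 is already known: since $Q$ is proper, Proposition \ref{stand} tells us that $I-Q$ is a retraction onto $K^Q$, and Remark \ref{alap}.1 guarantees that $K^Q$ is a pointed cone (closed as well, in the topological setting, so continuity of $I-Q$ is inherited from that of $Q$). Likewise, properness of $I-Q$ will be automatic once $I-Q$ is shown to be an aVn: Remark \ref{noproper}.1 records the identity $(I-Q)(I-(I-Q))=(I-Q)Q=Q-Q^2=0$.

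Positive homogeneity (axiom 2) is immediate from the positive homogeneity of $Q$ and the linearity of $I$:
\[
(I-Q)(tx)=tx-tQx=t(I-Q)x, \qquad t\in\R_+.
\]
For axiom 4, suppose $(I-Q)x=(I-Q)(-x)=0$. The first equation yields $x=Qx\in K$ and the second yields $-x=Q(-x)\in K$, and the pointedness of $K$ then forces $x=0$.

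The essential content is the subadditivity of $I-Q$ with respect to $\leq_{K^Q}$, and this is the one place where the hypothesis $-K\subset K^Q$ is needed. We have to show, for all $x,y\in X$, that $(I-Q)x+(I-Q)y-(I-Q)(x+y)\in K^Q$. After expanding and cancelling the $I$-terms, this reduces to
\[
Q(x+y)-Qx-Qy\in K^Q.
\]
Now $Q$ is subadditive with respect to $\leq_K$, so $Qx+Qy-Q(x+y)\in K$, that is, $Q(x+y)-Qx-Qy\in -K$. The hypothesis $-K\subset K^Q$ then finishes the argument. This subadditivity step is the main (and essentially only) obstacle: the hypothesis $-K\subset K^Q$ was built in precisely to translate the $K$-subadditivity of $Q$ into the $K^Q$-subadditivity of $I-Q$, and everything else in the proposition reduces to Proposition \ref{stand} and Remark \ref{noproper}.
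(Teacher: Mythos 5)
Your proof is correct and follows essentially the same route as the paper's: retraction via Proposition \ref{stand}, properness via Remark \ref{noproper}.1, and the key subadditivity step $Q(x+y)-Qx-Qy\in -K\subset K^Q$. The only (harmless) variation is in axiom 4, where you use pointedness of $K$ directly instead of first deducing $Qx=Q(-x)=0$ and invoking axiom 4 of $Q$.
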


\begin{proof} 

In view of Remark \ref{noproper} item 1, it is sufficient to prove
that $I-Q$ is an aVn.

$I-Q$ is a retraction onto $K^Q=(I-Q)X$ by Proposition \ref{stand}.

$I-Q$ is obviously positive homogeneous.

For arbitrary $x,\,y\in X$ it follows
$$(I-Q)x + (I-Q)y -(I-Q)(x+y) = -Qx -Qy +Q(x+y) \in -K\subset K^Q.$$
Thus $I-Q$ is subadditive.

The condition $(I-Q)x = (I-Q)(-x)=0$ implies $Qx+Q(-x)=0$, hence $Qx=Q(-x)=0.$
Thus $x=0$

\end{proof}

\begin{proposition}\label{prop2}

If $Q$ and $R$ are two proper aVn-s with the cone range $K$,
$Q\leq R$ and $K^Q\not=K^R$ then no aVn of form
$$S=\lambda Q + \mu R,\, \lambda,\,\mu \in (0,1),\, \lambda + \mu =1$$
can be proper.

\end{proposition}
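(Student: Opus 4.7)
My plan is to argue by contradiction: I will assume that $S=\lambda Q+\mu R$ is a proper aVn and derive the forbidden equality $K^Q=K^R$.

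First I will identify $K^S$. Since $Sx=\lambda Qx+\mu Rx$ is a positive combination of two vectors of the pointed cone $K$, we have $Sx=0$ if and only if $Qx=Rx=0$, which gives $K^S=K^Q\cap K^R$. The hypothesis $Q\leq R$, read as $Rx-Qx\in K$, combined with pointedness forces $K^R\subset K^Q$: whenever $Rx=0$, the element $-Qx=Rx-Qx$ lies in $K$ while $Qx\in K$, so $Qx\in K\cap(-K)=\{0\}$. Consequently $K^S=K^R$, and the goal reduces to establishing the reverse inclusion $K^Q\subset K^R$, which will contradict $K^Q\neq K^R$.

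Next I will exploit properness of $S$ through Proposition~\ref{stand}: $x-Sx\in K^S=K^R$ for every $x\in X$, so $R(x-Sx)=0$. Specializing to $x\in K^Q$ collapses $Sx$ to $\mu Rx$, giving the key identity $R(x-\mu Rx)=0$. Subadditivity of $R$, positive homogeneity and $R^2=R$ then produce
\[ Rx\leq R(x-\mu Rx)+\mu R(Rx)=\mu Rx, \]
whence $\lambda Rx=(1-\mu)Rx\in -K$; since $Rx\in K$ and $K$ is pointed, $Rx=0$, i.e.\ $x\in K^R$. This closes the argument.

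I expect the only subtle point to be the very first step, namely the conversion of the convex-combination structure of $S$ into the set-theoretic intersection $K^S=K^Q\cap K^R$, which rests entirely on pointedness of $K$; once this is in hand, everything else is a short unwinding of Proposition~\ref{stand} together with the subadditivity and positive homogeneity of $R$, and no deeper machinery appears to be needed.
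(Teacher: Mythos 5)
Your proof is correct, but it takes a genuinely different route from the paper's. Both arguments open identically: pointedness of $K$ gives $K^S=K^Q\cap K^R$, and $Q\leq R$ forces $K^R\subset K^Q$, hence $K^S=K^R$. From there the paper stays at the level of image sets: it computes $(I-S)X=\lambda(I-Q)X+\mu(I-R)X=\lambda K^Q+\mu K^R=K^Q$ and concludes via item 2 of Proposition \ref{stand} that $K^S=K^R\neq K^Q=(I-S)X$, so $S$ is not proper. You instead argue pointwise by contradiction: properness of $S$ puts $x-\mu Rx=(I-S)x$ into $K^S=K^R$ for every $x\in K^Q$, and then subadditivity, positive homogeneity and idempotence of $R$ squeeze $Rx\leq\mu Rx$, so $\lambda Rx\in K\cap(-K)=\{0\}$ and $x\in K^R$; this gives $K^Q\subset K^R$, contradicting $K^Q\neq K^R$. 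What your version buys is that it avoids the paper's set identity $(I-S)X=\lambda(I-Q)X+\mu(I-R)X$, of which only the inclusion $\subset$ is immediate (equality would require a single $x$ realizing prescribed values of both $(I-Q)x$ and $(I-R)x$ simultaneously); your pointwise use of the subadditivity of $R$ replaces exactly that step and is, if anything, the tighter of the two arguments. The price is that you invoke subadditivity of $R$ explicitly, which the paper's proof never needs.
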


\begin{proof}

Since $Q\leq R$, it follows that 
\begin{equation}\label{neg}
K^R \subset K^Q,\, K^R\not=K^Q.
\end{equation}

We have $K^S=\{x\in X:\,(\lambda Q + \mu R)x=0\}.$ Since $Qx,\,Rx \in K$,
and $Qx\leq Rx,$ we have that $(\lambda Q + \mu R)x=0$ if and only if $Rx=0$.
Hence
\begin{equation}\label{neg1}
K^S=\{x\in X:\,(\lambda Q + \mu R)x=0\}=\{x\in X:\,Rx=0\}=K^R.
\end{equation}

Further, item 2 of Proposition \ref{stand} implies
\begin{equation}\label{neg2}
(I-S)X= \lambda (I-Q)X+ \mu(I-R)X = \lambda K^Q +\mu K^R=K^Q+K^R=K^Q
\end{equation}
according (\ref{neg}).

The relations (\ref{neg1}), (\ref{neg2}) together with (\ref{neg}) show
that
$$K^S=K^R\not= K^Q= (I-S)X,$$
Hence, by item 2 of Proposition \ref{stand}, $S$ is not proper.

\end{proof}

\begin{example}\label{exx}

For providing an example of a proper, one range aVn we consider
$\R^m$ endowed with a cartesian reference system.
For $y=(y_1,...,y_m)^T$ we put $q(y) = \mmx \{y_i^+:\,i=1,...,m\}$

Then $q$ is an asymmetric norm and $Qy=q(y)x$ 
with $x=(1,...,1)^T$, is an one range aVn.

The range cone of $Q$ is $K=Q(\R^m)= \R_+x$ and $K^Q=-\R^m_+$.

To verify condition (\ref{felt})
we observe that $y-q(y)x\in -\R^m_+$ and one has $q(y-q(y)x)=0,\,\forall y\in \R^m.$
Hence by Lemma \ref{qpro} , Q is proper.

Since $Q$ is proper, and $-K\subset -\R^m_+=K^Q$ it follows by Proposition \ref{prop1}
that $I-Q$ is a proper aVn too.

Observe that $Q$ is isotone, $I-Q$ is not isotone.

Indeed, $u\leq_K v$ is equivalent with $v=u+tx$ with some $t\in \R_+$.
Hence
$$Q(u+tx)=q(u+tx)x=\mmx\{(u_i+tx_i)^+\}x=(\mmx\{u_i^+\}+t)x =Qu+tx.$$
Thus $Q(u+tx)-Qu=tx\in K$ and the isotonicity of $Q$ follows.

If $u=x$ and $v=(2,1,...,1)^T$, then $u-v\in K^Q$, $(I-Q)u=0$,
$(I-Q)v=(0,-1,...,-1)^T$ and hence $(I-Q)u-(I-Q)v\notin K^Q,$
that is $I-Q$ is not isotone.

Observe that $x=(1,...,1)^T$ is the only element with $q(x)=1$ for which
$Qy=q(y)x$ is a proper aVn.

Indeed, if $u\in \R^m$ has the property that $q(u)=1$ and $u\not=x$, then $u$
has at least one coordinate $u_j<1$.  Thus
$x-q(x)u= x-u$ and $x_j-u_j=1-u_j >0$, relation showing that
$q(x-q(x)u)>0$, contradicting (\ref{felt}).

\end{example}

The condition on a range one aVn of being proper is rather restrictive.
Simple, standard one range aVn fails to have this property
as the following example shows.

\begin{example}\label{nopro}

Let $(H,\leq)$ be a a Hilbert vector lattice with the positive cone $K$.
Since the norm in this space is monotone, it is easy to show that
$ q(y)=\|y^+\|$ is an asymmetric norm and if $\ddm X\geq 2$ no one range
$Qy = q(y)x$ with $x=x^+$, $\|x\|=1$ can be proper.

Indeed, from an easy geometrical reasoning there exists $y\in K$
with $\|y\|=1$ such that $(y-x)^+\not=0$. Hence
$q(y-q(y))x=(\|(y-\|y^+\|x)^+\|=\|(y-x)^+\|\not=0$. which 
according Lemma \ref{qpro} shows that $Q$ cannot be proper.

\end{example}

%%%%%%%%%%%%%%%%%%%%%%%%%%%%%%%%%%%%%%%%%%%%%%%%%%%%%%%%%%%%%%%%%%%%%%%%%%%%%%%%%%%%%%%%%%%

\section{Construction of proper range one aVn-s}
  
\begin{proposition}\label{cons}

Suppose that $g: X\to \R_+$ is an asymmetric norm.
Define
\begin{equation*}
q:\R\times X\to \R_+ \;\;\textrm{by}\;\;q(t,x)=(t+g(x))^+.
\end{equation*}
Then $q$ is an asymmetric norm satisfying the relation
\begin{equation}\label{cons2}
q((t,x)-q(t,x)(1,0))=0,\;\;\forall \;(t,x), 
\end{equation}
and hence
\begin{equation}\label{cons3}
Q:\R\times X \to \R\times X,\;\; Q((t,x))=q((t,x))(1,0)
\end{equation}
is a proper range one aVn.
\end{proposition}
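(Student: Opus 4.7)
The statement bundles three tasks: (a) check that $q$ satisfies the three axioms of Definition \ref{assnor}, so is an asymmetric norm on the vector space $\R\times X$; (b) verify the identity (\ref{cons2}); (c) apply Lemma \ref{qpro} with the unit vector $(1,0)$ to conclude that $Q$ is a proper range one aVn. I would execute these in exactly that order.

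For (a), the three axioms would follow routinely from the corresponding properties of $g$ together with the elementary facts $(\lambda r)^+=\lambda r^+$ for $\lambda\geq 0$ and $(r+s)^+\leq r^++s^+$ for $r,s\in\R$. Positive homogeneity is direct. Subadditivity chains as
\[
q((t_1,x_1)+(t_2,x_2))=(t_1+t_2+g(x_1+x_2))^+\leq (t_1+g(x_1))^++(t_2+g(x_2))^+,
\]
using subadditivity of $g$ in the first step. For separation, $q(t,x)=q(-t,-x)=0$ gives $t+g(x)\leq 0$ and $-t+g(-x)\leq 0$; adding them yields $g(x)+g(-x)\leq 0$, so by nonnegativity of $g$ one has $g(x)=g(-x)=0$, which forces $x=0$ by axiom 3 for $g$, and in turn $t=0$.

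For (b), first observe that $g(0)=0$ (a direct consequence of Definition \ref{assnor} item 1), hence $q(1,0)=1$, so $(1,0)$ is a legitimate choice for the unit vector in Lemma \ref{qpro}. A direct computation gives
\[
(t,x)-q(t,x)(1,0)=(t-q(t,x),x),
\]
so writing $s=t+g(x)$, which satisfies $q(t,x)=s^+$, one obtains
\[
q((t,x)-q(t,x)(1,0))=(t-s^++g(x))^+=(s-s^+)^+=0,
\]
since $s-s^+\leq 0$ for every $s\in\R$. This is precisely (\ref{cons2}).

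Step (c) is then immediate: (\ref{cons2}) is exactly condition (\ref{felt}) of Lemma \ref{qpro} applied to the asymmetric norm $q$ and the unit vector $(1,0)$, so $Q((t,x))=q(t,x)(1,0)$ is a proper range one aVn. There is no serious obstacle in this proof; the only point to keep in mind is that one must know $q(1,0)=1$ so that the hypothesis of Lemma \ref{qpro} is met, and this is ensured automatically by $g(0)=0$.
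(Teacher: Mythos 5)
Your proposal is correct and follows the same route as the paper: verify the three asymmetric-norm axioms for $q$, establish (\ref{cons2}), and invoke Lemma \ref{qpro}. The only differences are minor improvements in the details --- your subadditivity argument via monotonicity and subadditivity of the scalar positive part avoids the paper's case analysis, your direct computation $(s-s^+)^+=0$ proves (\ref{cons2}) uniformly without the paper's normalization to $q((t,x))=1$, and you explicitly check $q((1,0))=1$, which the paper leaves implicit.
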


\begin{proof}

The functional $q$ is obviously positively homogeneous.
To prove its subadditivity, we must verify the relation
\begin{equation}\label{cons4}
q((t_1,x_1)+(t_2,x_2))=(t_1+t_2+g(x_1+x_2))^+\leq (t_1+g(x_1))^+ + (t_2+g(x_2))^+= q((t_1,x_1))+ q((t_2,x_2)).
\end{equation}
If all the involved sums in round brackets  are non-negative, the relation (\ref{cons4}) results from
the subadditivity of $g$.

Suppose that $t_1+g(x_1) \leq 0$. Then
$$t_1+t_2 +g(x_1+x_2) \leq -g(x_1) +t_2 +g(x_1+x_2)\leq -g(x_1) +t_2 +g(x_1) +g(x_2)= t_2 +g(x_2) $$
and the relation (\ref{cons4}) follows independently from the signs of the involved terms.

Suppose $q((t,x))=q(-(t,x))=0$, that is that $t+g(x)=-t +g(-x)=0.$ 
Then $g(x)+g(-x)=0$ and since both the terms are non-negative, they must be zero.
We have first that $x=0$ and then that $t=0$.

Thus $q$ is an asymmetric norm. To verify relation (\ref{cons2}) it is sufficient to
consider that $q((t,x))=(t+g(x))^+=1.$ Then it becomes 
$$q((t,x)-(1,0))=0,\;\;\textrm{with} \; q((t,x))=1.$$
But $q((t,x)-(1,0))=q((t-1,x))=(t-1+g(x))^+=0$, and it follows the relation (\ref{cons2}).

According to Lemma \ref{qpro}, the operator $Q$ defined at (\ref{cons3}) is
a proper range one aVn.

\end{proof}

\begin{corollary}\label{cons5}
Let $Q$ be the aVn constructed in Proposition \ref{cons}. Then
denoting $K=\R_+(1,0)$ we have
$$K^Q=\{(t,x):\,t+g(x)\leq 0\}$$
and since $-K\subset K^Q$, using Proposition \ref{prop1}, we conclude
that $(I-Q)((t,x))= (t,x)-q((t,x))(1,0)$ is a proper aVn with
the cone range $K^Q.$
\end{corollary}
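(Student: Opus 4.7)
The corollary asks for three things: an explicit description of $K^Q$, the inclusion $-K\subset K^Q$, and the conclusion that $(I-Q)((t,x)) = (t,x) - q((t,x))(1,0)$ is a proper aVn with cone range $K^Q$. My plan is to verify the first two by direct unpacking of the definitions in Proposition \ref{cons} and then let the third follow mechanically from Propositions \ref{prop1} and \ref{stand}.

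First, I would compute $K^Q$. By definition $K^Q=\{(t,x)\in\R\times X:Q((t,x))=0\}$, and since $Q((t,x))=q((t,x))(1,0)$ and $(1,0)\ne 0$, we have $Q((t,x))=0$ if and only if $q((t,x))=(t+g(x))^+=0$, which in turn is equivalent to $t+g(x)\le 0$. This immediately yields the stated description of $K^Q$.

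Next, I would check $-K\subset K^Q$. Any element of $-K$ has the form $(-s,0)$ with $s\ge 0$. Since the positive homogeneity of $g$ forces $g(0)=0$, we get $-s+g(0)=-s\le 0$, so $(-s,0)\in K^Q$ by the description just established.

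Finally, since Proposition \ref{cons} ensures $Q$ is a proper aVn with cone range $K=\R_+(1,0)$, and we have just proved $-K\subset K^Q$, the hypothesis of Proposition \ref{prop1} is satisfied, so $I-Q$ is a proper aVn. Its cone range is $(I-Q)(\R\times X)$, which by item 2 of Proposition \ref{stand} (applied to the proper aVn $Q$) coincides with $K^Q$; the explicit formula $(I-Q)((t,x))=(t,x)-q((t,x))(1,0)$ is just the definition of $I-Q$. There is no genuine obstacle here: the whole statement is a direct bookkeeping consequence of the preceding machinery, and the only content-bearing step is the observation that $q((t,x))(1,0)=0$ reduces to the scalar inequality $t+g(x)\le 0$.
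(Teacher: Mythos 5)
Your proposal is correct and follows exactly the route the paper intends (the paper gives no separate proof, the argument being embedded in the statement itself): identify $K^Q$ by the scalar equation $(t+g(x))^+=0$, verify $-K\subset K^Q$ via $g(0)=0$, and invoke Proposition \ref{prop1} together with item 2 of Proposition \ref{stand} to conclude. All steps check out; there is nothing to add.
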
 

%%%%%%%%%%%%%%%%%%%%%%%%%%%%%%%%%%%%%%%%%%%%%%%%%%%%%%%%%%%%%%%%%%%%%%%%%%%%%%%%%%%%%%%%%%%%%%%%%%%%%%%%

\section{Proper aVn in the Euclidean space with\\ proper cone range}

The proper cone in the Euclidean space is a pointed, closed convex cone with 
nonempty interior.

The subset $D\subset K$ is called \emph{the base of the cone} if for each
$x\in K\setminus \{0\}$ there exists a unique $\lambda >0$ such that
$\lambda x\in D$.

The following is a standard result in the convex geometry:

\begin{lemma}\label{proco}

If $K\subset \R^{m+1}$ is a proper cone, then, by choosing an appropriate system of
reference, we can find a closed, bounded, 
convex base $D$, of dimension $m$, such that a line through $0$ meeting
the interior of $K$,  is orthogonal to $D$ and the hyperplane through $0$
orthogonal to this line meets $K$ in the single point $0$.

\end{lemma}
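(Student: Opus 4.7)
The plan is to find a unit vector $u\in\intr K\cap\intr K^{*}$, orient the Cartesian reference system so that $u$ becomes one of the axis directions, and then cut the base out of $K$ by an affine hyperplane orthogonal to $u$. Once such a $u$ is at hand, the line $\R u$ contains $u\in\intr K$, so it meets the interior of $K$; and the orthogonal hyperplane $u^{\perp}$ satisfies $K\cap u^{\perp}=\{0\}$ because $u\in\intr K^{*}$ is precisely the statement that $\langle u,x\rangle>0$ for every $x\in K\setminus\{0\}$.

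The crux is to establish $\intr K\cap\intr K^{*}\neq\emptyset$ for an arbitrary proper cone $K\subset\R^{m+1}$, which I would prove by separation. Suppose the two nonempty open convex cones were disjoint. By Hahn--Banach there is a nonzero $w\in\R^{m+1}$ separating them; since both sets are cones with apex at $0$, positive scaling forces the separating constant to be $0$, with $\langle w,\cdot\rangle\leq 0$ on $K$ and $\langle w,\cdot\rangle\geq 0$ on $K^{*}$. The first says $-w\in K^{*}$, and the second, together with $K^{**}=K$, places $w\in K$; then $\langle -w,w\rangle\geq 0$ forces $\|w\|^{2}\leq 0$, hence $w=0$, a contradiction.

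With a unit $u\in\intr K\cap\intr K^{*}$ chosen, I would set $D:=\{x\in K:\langle u,x\rangle=1\}$, which sits inside the affine hyperplane $H$ orthogonal to $\R u$ at distance $1$ from the origin. Since $\langle u,x\rangle>0$ on $K\setminus\{0\}$, every nonzero ray of $K$ meets $H$ in exactly one point, so $D$ is a base of $K$; it is closed and convex as the intersection of two such sets. For boundedness, the usual compactness argument applies: if $x_{n}\in D$ had $\|x_{n}\|\to\infty$, a subsequential limit of $x_{n}/\|x_{n}\|$ would be a unit vector $y\in K$ with $\langle u,y\rangle=0$, violating $u\in\intr K^{*}$. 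For the dimension, rescaling any $u_{0}\in\intr K$ so that $\langle u,u_{0}\rangle=1$ produces a point of $D$ admitting an $m$-dimensional neighborhood inside $H\cap\intr K\subset D$, so $\dim D=m$.

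The main obstacle is the nonemptiness of $\intr K\cap\intr K^{*}$; once that separation argument is in place, the remaining conditions of the lemma follow from standard facts about bases of proper cones, and the choice of reference system (rotating so that $u$ becomes, say, $e_{m+1}$) is cosmetic.
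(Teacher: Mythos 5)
The paper states this lemma without proof, as a standard fact of convex geometry, so there is no in-paper argument to compare against; judged on its own, your proof is correct and complete. The separation argument for $\intr K\cap\intr K^*\neq\emptyset$ is sound: homogeneity forces the separating constant to $0$, closedness of $K$ and the bipolar identity $K^{**}=K$ place $-w\in K^*$ and $w\in K$, and $\langle -w,w\rangle\geq 0$ kills $w$. The only point you pass over silently is that both interiors are nonempty to begin with, which is exactly where the hypotheses enter: $\intr K\neq\emptyset$ because $K$ is generating, and $\intr K^*\neq\emptyset$ because $K$ is pointed (otherwise $K^*$ lies in a hyperplane $v^\perp$ and $\pm v\in K^{**}=K$). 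With that noted, the remaining verifications — that $D=\{x\in K:\langle u,x\rangle=1\}$ is a closed convex base, bounded by the normalized-sequence argument, of dimension $m$ because it contains an $m$-dimensional neighborhood of a rescaled interior point, with $\R u$ orthogonal to it and $u^\perp\cap K=\{0\}$ from $u\in\intr K^*$ — are all correct, and the construction even delivers the extra feature the paper uses later in Theorem \ref{exavn}, namely that the distinguished ray meets $D$ in a relative interior point.
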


\begin{theorem}\label{exavn}

If $K$ is a proper cone in $\R^{m+1},\;m \geq 2$, then there exists a proper
aVn $Q: \R^{m+1} \to \R^{m+1}$, whose cone range is $K$, and $K^Q$ is one dimensional.

\end{theorem}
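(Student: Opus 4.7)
The plan is to realize the given proper cone $K$ as the polar $K^{Q_0}$ of a range-one aVn $Q_0$ built via Proposition \ref{cons}, and then invoke Corollary \ref{cons5} to obtain $I-Q_0$ as the desired aVn with cone range $K$ and one-dimensional polar.

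First, I would apply Lemma \ref{proco} to choose a Cartesian system $(t,x)\in\R\times\R^m$ on $\R^{m+1}$ in which $K$ admits a compact convex base $D=\{-1\}\times\widetilde D$ contained in the hyperplane $\{t=-1\}$, the $t$-axis meets $\intr K$, and $\{t=0\}\cap K=\{0\}$. Then $\widetilde D\subset\R^m$ is compact, convex, $m$-dimensional, with $0$ in its interior. Let $g:\R^m\to\R_+$ be the Minkowski gauge $g(x)=\inf\{\lambda\geq 0:x\in\lambda\widetilde D\}$. Standard convex-geometry arguments show that $g$ is positively homogeneous, subadditive (by convexity of $\widetilde D$), continuous, and satisfies $g(x)=0\Leftrightarrow x=0$ (by boundedness of $\widetilde D$), hence $g$ is an asymmetric norm on $\R^m$.

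The key identification is $K=\{(t,x):t+g(x)\leq 0\}$: any $(t,x)\in K\setminus\{0\}$ has a unique representation $(t,x)=\lambda(-1,y)$ with $\lambda>0$, $y\in\widetilde D$, giving $g(x)=\lambda g(y)\leq\lambda=-t$; conversely, if $t+g(x)\leq 0$ and $(t,x)\neq 0$, then $t<0$ and $x/(-t)\in\widetilde D$, whence $(t,x)\in K$. Applying Proposition \ref{cons} to this $g$ produces a proper range-one aVn $Q_0(t,x)=(t+g(x))^+(1,0)$ with cone range $\R_+(1,0)$ and polar $K^{Q_0}=K$. Since trivially $-\R_+(1,0)\subset K^{Q_0}$, Corollary \ref{cons5} applies and $Q:=I-Q_0$ is a proper aVn with cone range $K$. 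Finally, from $(I-Q_0)(t,x)=(t-(t+g(x))^+,x)$ the equation $(I-Q_0)(t,x)=0$ forces $x=0$ and then $t=t^+\geq 0$, so $K^Q=\R_+(1,0)$ is one-dimensional; equivalently this is the identity $(K^{Q_0})^{I-Q_0}=\R_+(1,0)$ recorded in Proposition \ref{stand}.

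The only substantive step in this plan is the identification of $K$ with the hypograph $\{(t,x):t+g(x)\leq 0\}$ via the Minkowski gauge of the base; once that is in place, the conclusion follows immediately from Proposition \ref{cons} and Corollary \ref{cons5}. The verification that $g$ is an asymmetric norm is routine convex-geometric bookkeeping, and the hypothesis $m\geq 2$ enters only insofar as $K$ must be full-dimensional so that Lemma \ref{proco} delivers a base with $0$ in its interior.
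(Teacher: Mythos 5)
Your proposal is correct and follows essentially the same route as the paper: choose coordinates via Lemma \ref{proco}, represent $K$ as $\{(t,x):t+g(x)\leq 0\}$ using the Minkowski gauge $g$ of the base, and then obtain $Q$ as $I$ minus the range-one proper aVn supplied by Proposition \ref{cons} and Corollary \ref{cons5}. The only differences are cosmetic (you additionally spell out why $g$ is an asymmetric norm and why $K^Q=\R_+(1,0)$, both of which the paper treats as immediate).
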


\begin{proof}

From Lemma \ref{proco} it follows that with an appropriate choice of the 
reference system in $\R^{m+1}=\R \times \R^m$, we can find a closed, 
bounded base $D$ of $K$ of dimension $m$
orthogonal to the ray $(-\R_+,0) \subset K$. The hyperplane through $0$ 
orthogonal to this ray meets $K$ in the single point $0$, and this ray meets
$D$ in its relative interior point $(-1,0)$, where we have denoted an element in
$\R\times \R^m$ with $(t,x),\;t\in \R, \;x\in \R^m$. Hence if 
$(t,x)\in K\setminus \{0\}$, then $t<0$. The
$m$ dimensional hyperplane $H$ consisting 
of points of form $(-1,x),\; x\in R^m$ is orthogonal to $(-\R_+,0)$.

Identifying $H$ with an $m$-dimensional Euclidean space and denoting with $D'$ the set $D$
considered as subset of this space, we have the following picture: 
$D'$ is a bounded, closed convex set containing $0$ (identified with $(-1,0)$) of this space 
as interior point. Denote by $g$ the gauge of $D'$; then it will be an
asymmetric norm in an $m$-dimensional space (see e. g. \cite{Cobzas2013}, p. 165).

We shall show that
\begin{equation}\label{gauge}
K=\{(t,x): t+g(x)\leq 0\}.
\end{equation}

Take $(t,x)\in K\setminus \{0\}$. Then there exists the unique positive
$\lambda$ with $\lambda (t,x)\in D$. Thus $\lambda (t,x)=(-1,\lambda x).$
Hence $\lambda t=-1$ and $\lambda x\in D'$. Accordingly, $g(\lambda x)\leq 1$.
Putting $\lambda = -\frac{1}{t}$ in the last relation, we get 
$g(-\frac{1}{t} x) \leq 1$ and since $t$ is negative and $g$ 
is positive homogeneous, $g(x)\leq -t$, that is $t+g(x)\leq 0.$ Hence 
$(t,x)$ is in the second term of the equality (\ref{gauge}).

Conversely, if $t+g(x)\leq 0$, then $g(-\frac{1}{t} x)\leq 1$, hence
$-\frac{1}{t} x \in D'$. Thus $(-1,-\frac{1}{t} x)\in D$ and
$-t(-1,-\frac{1}{t}x) = (t,x)\in K,$ which completes the 
proof of the relation (\ref{gauge}).

Using now Proposition \ref{cons} and the Corollary \ref{cons5},
we conclude that
$q((t,x))=(t+g(x))^+$ is an asymmetric norm satisfying the relation (\ref{felt}),
hence $R=q((t,x))(1,0)$ is a proper aVn with the cone range $L=\R_+(1,0)$.
Since $-L\subset L^R=K$, it follows that $Q=I-R$ is a proper aVn with 
the cone range $K$ and $K^Q=L.$
\end{proof}

%%%%%%%%%%%%%%%%%%%%%%%%%%%%%%%%%%%%%%%%%%%%%%%%%%%%%%%%%%%%%%%%%%%%%%%%%%%%%%%%%%%

\section{Asymmetric vector norm having cone range and polar with non-empty interiors}

Let $H$ be a Hilbert space. The mapping $\zeta:H\to H$ is called \emph{sharp} if $\zeta(0)=0$ 
and $\im\zeta\cap\im(-\zeta)=\{0\}$. If $\zeta$ is sharp, then $t\zeta$ is also 
sharp, for all $t\in\mathbb R$. 
			
The following theorem of S. Z. N\'emeth ((\cite{Nemeth2010y} Theorem 2 and \cite{NemethNemeth2011x} Theorem 5.2
point 3) will be an important toll in our proofs:

\begin{theorem}\label{sanyi1}
Let $K$ be a closed, generating normal cone in the Hilbert space $H$. Then
$K$ is a lattice cone if and only if there exists a $\leq_K$-isotone continuous retraction 
$R:H\to K$  onto $K$ such that $I-R$ is sharp. 
\end{theorem}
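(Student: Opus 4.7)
For the forward ($\Rightarrow$) implication, I would take $R$ to be the positive part operator $x\mapsto x^{+}$ of the Riesz space $(H,\leq_{K})$. The retraction property is immediate since $x^{+}=x$ for $x\in K$; isotonicity $x\leq y\Rightarrow x^{+}\leq y^{+}$ is a standard Riesz-space identity; and continuity of $x\mapsto x^{+}$ follows from the closedness of $K$, as recorded in Section~\ref{secprel}. For sharpness of $I-R$, I would observe that $(I-R)(x)=x-x^{+}=-x^{-}$, whence $\im(I-R)\subseteq -K$, and therefore $\im(I-R)\cap\im(-(I-R))\subseteq(-K)\cap K=\{0\}$ by pointedness of $K$; together with $(I-R)(0)=0$, this is what sharpness requires.

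For the reverse ($\Leftarrow$) implication, the plan is to prove that $R(x)=\sup\{0,x\}$ in $(H,\leq_{K})$ for every $x\in H$. Once this is established, translation invariance of $\leq_{K}$ delivers $\sup\{x,y\}=x+R(y-x)$, and $(H,\leq_{K})$ is then a Riesz space, so $K$ is a lattice cone. The verification splits into three pieces: (i) $R(x)\in K$ gives $R(x)\geq 0$ at once; (ii) if $z\in K$ satisfies $z\geq x$, then isotonicity yields $R(x)\leq R(z)=z$, so $R(x)$ is dominated by every $K$-upper bound of $x$; and (iii) the hard inequality $R(x)\geq x$, equivalently $(I-R)(x)\in -K$.

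The main obstacle is step (iii), which amounts to the inclusion $\im(I-R)\subseteq -K$. The opposite inclusion $-K\subseteq\im(I-R)$ is cheap: for $x\in -K$, isotonicity gives $R(x)\leq R(0)=0$, while $R(x)\in K$ and pointedness of $K$ force $R(x)=0$, so $(I-R)(x)=x\in -K$. Combining this with the sharpness of $I-R$ and the consequent inclusion $K=-(-K)\subseteq -\im(I-R)$ already yields $K\cap\im(I-R)=\{0\}$. Upgrading this separation to the full containment $\im(I-R)\subseteq -K$ is the technical heart of the argument, and here the Hilbert space structure, the continuity of $R$, and the normality of $K$ must all be used: given a hypothetical $y=(I-R)(x)$ lying outside $-K$, the normality of $K$ together with an approximation argument would let one construct from $y$ a non-trivial element of $\im(I-R)\cap(-\im(I-R))$, contradicting sharpness. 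This is precisely the content of Theorem~2 of \cite{Nemeth2010y} and of Theorem~5.2, point~3, of \cite{NemethNemeth2011x}, to which I would defer for the detailed contradiction argument.
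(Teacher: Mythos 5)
First, note that the paper does not prove Theorem \ref{sanyi1} at all: it is imported verbatim from \cite{Nemeth2010y} (Theorem 2) and \cite{NemethNemeth2011x} (Theorem 5.2, point 3), so the only possible comparison is with those external proofs. Your forward implication is fine: taking $R$ to be the positive part operator, the retraction, isotonicity and continuity properties are exactly those recorded in Section \ref{secprel} and Example \ref{latt}, and sharpness of $I-R$ follows from $\im(I-R)\subseteq -K$ together with pointedness of $K$, as you say.

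The reverse implication, however, contains a genuine gap, and it is not merely a deferred technicality. Your plan is to show that \emph{any} isotone continuous retraction $R$ onto $K$ with $I-R$ sharp satisfies $Rx=\sup\{0,x\}$, and you isolate as ``step (iii)'' the inequality $x\leq Rx$, i.e.\ $\im(I-R)\subseteq -K$. This intermediate statement is false, so no combination of normality, continuity and sharpness can establish it, and the cited papers cannot contain a proof of it. Concretely, take $H=\R^2$, $K=\R^2_+$ (closed, generating, normal, and of course latticial), and define
$$R(x_1,x_2)=\bigl((x_1-x_2^-)^+,\,x_2^+\bigr).$$
This is a continuous, positively homogeneous retraction onto $K$; it is isotone because $x_2\mapsto x_2^-$ is antitone, so $(x_1,x_2)\leq(y_1,y_2)$ gives $x_1-x_2^-\leq y_1-y_2^-$; and since $(I-R)(x_1,x_2)=(\min\{x_1,x_2^-\},\,-x_2^-)$, one computes $\im(I-R)=\{(c,d):d\leq 0,\ c\leq -d\}$, whose intersection with its negative is $\{0\}$, so $I-R$ is sharp. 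Yet $(I-R)(1,-1)=(1,-1)\notin -K$, i.e.\ $(1,-1)\not\leq R(1,-1)=(0,0)$, and $R(1,-1)\neq\sup\{0,(1,-1)\}=(1,0)$. Thus an isotone retraction with sharp complement need not be the positive part operator, and the converse cannot be proved by identifying $R$ with $L$; this is also implicit in the paper's own Theorem \ref{foo}, which needs the additional hypothesis $K^Q=-K$ precisely in order to force $Q=L$. A correct proof of the converse must extract the lattice structure from the mere existence of such an $R$ by a different mechanism, which is what the cited references actually do; your reduction does not reconstruct their argument.
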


\begin{theorem}\label{foo}
Let $K$  be a closed, generating normal cone in the separable
Hilbert space $H$.

For the proper aVn $Q$ with the cone range $K$
the following two assertions are equivalent:
\begin{enumerate}

\item $K$ is a lattice cone and $Q$ is its lattice aVn.

\item $K^Q=-K$.

\end{enumerate}
\end{theorem}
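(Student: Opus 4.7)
The plan is to handle the two implications separately, using Theorem \ref{sanyi1} as the bridge for the harder direction. The implication $1\Rightarrow 2$ is immediate: if $Q=L$ is the lattice aVn, so that $Qx=x^+$, then
$$K^Q=\{x\in H:x^+=0\}=\{x\in H:x\leq_K 0\}=-K.$$

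For $2\Rightarrow 1$ I would first invoke Theorem \ref{sanyi1} to conclude that $K$ is a lattice cone, and then identify $Q$ with the corresponding positive part operator $L$. To apply Theorem \ref{sanyi1} I must check that $Q$ is an isotone continuous retraction onto $K$ and that $I-Q$ is sharp. Continuity is built into the definition of an aVn when $K$ is closed. For isotonicity: if $x\leq_K y$, then $x-y\in -K=K^Q$, so $Q(x-y)=0$; subadditivity then yields $Qx=Q(y+(x-y))\leq Qy+Q(x-y)=Qy$. For sharpness of $I-Q$, properness of $Q$ gives $\im(I-Q)=K^Q=-K$ by item 2 of Proposition \ref{stand}, and $\im(-(I-Q))=K$; these two images intersect only at $0$ because $K$ is pointed, and $(I-Q)(0)=0$ follows from positive homogeneity.

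With $K$ now known to be a lattice cone with positive part operator $L$, the remaining task is to prove $Q=L$. For $x\in H$, write $x=x^+-x^-$ with $x^{\pm}\in K$. Subadditivity together with $Q(x^+)=x^+$ (since $Q$ is a retraction onto $K$) and $Q(-x^-)=0$ (since $-x^-\in -K=K^Q$) yields $Qx\leq x^+$. Applying subadditivity instead to the decomposition $x^+=x+x^-$ and using $Q(x^-)=x^-$ gives $x^+\leq Qx+x^-$, i.e.\ $x\leq Qx$. Since in addition $Qx\in K$, the element $Qx$ is an upper bound of $\{x,0\}$, so $Qx\geq x\vee 0=x^+$. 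Combined with $Qx\leq x^+$ this forces $Qx=x^+=Lx$.

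The main obstacle is verifying the hypotheses of Theorem \ref{sanyi1}, in particular the isotonicity of $Q$ and the sharpness of $I-Q$, both of which rely crucially on the identification $K^Q=-K$ together with properness (via Proposition \ref{stand}); once Theorem \ref{sanyi1} delivers the lattice structure, the identification $Q=L$ is just a sandwich argument between $x$ and $x^+$ together with $Qx\in K$.
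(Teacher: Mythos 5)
Your proof is correct and follows essentially the same route as the paper: both directions hinge on deriving isotonicity of $Q$ from subadditivity and $K^Q=-K$, verifying sharpness of $I-Q$ via Proposition \ref{stand} and pointedness, invoking Theorem \ref{sanyi1} to get the lattice structure, and then sandwiching $Qx$ between $x\vee 0$ and $x^+$. The only cosmetic difference is that you obtain the inequalities $Qx\leq x^+$ and $x\leq Qx$ directly from subadditivity applied to the decompositions $x=x^+-x^-$ and $x^+=x+x^-$, whereas the paper gets them from isotonicity ($Qx\leq Q(Lx)=Lx$) and from $(I-Q)x\in K^Q=-K$.
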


\begin{proof}

The implication $1\Rightarrow 2$ is obvious.

To show the implication $2 \Rightarrow 1$ we prove first the following assertion:

(a) \emph{Let $Q$ be a proper aVn with the cone range $K\subset H$
which is closed normal and generating. If $K\subset -K^Q$, then
$K$ is a lattice cone and $Q\leq L$ , where $L$ is the lattice aVn of $K$.}

Take $v\in K^Q$ and $x\in X$ arbitrarily. Then

$$Qx\leq Q(x-v)+Qv=Q(x-v).$$

This means that $Qx\leq Qy$ whenever $y-x \in -K^Q$. Since $K\subset -K^Q$
it follows that $Qx\leq Qy$ whenever $y-x\in K$ that is, whenever $x\leq y$.
Thus $Q$ is isotone. Since, by item 2 of Proposition \ref{stand}, $K^Q=(I-Q)X$ is pointed, it follows that
$I-Q$ is sharp, Theorem \ref{sanyi1} applies and  $K$ is a lattice cone.

Let $x\in H$ be arbitrary. Then $x\leq Lx$ and using the isotonicity of $Q$ we have
$Qx\leq Q(Lx)=Lx.$

(b) Suppose $x\leq Qx,\,\forall x\in H$. Then for an arbitrary $v\in H$ with
$x\leq v$ it holds $x\leq Qx\leq Qv$ and whenever $Qx \leq w$ for some $w\in K$
it follows that $x\leq Qx\leq Qw=w.$ Hence $Lx=x\vee 0 = Qx$.

(c) As $K=-K^Q$, it follows $x-Qx\in -K$. Thus $x\leq Qx,\,\forall x\in H$, the above
reason applies and $Q=L$.

\end{proof}

\begin{theorem}\label{foo1}

If $Q$ and $I-Q$ are both proper asymmetric vector norms in $H$ having closed generating normal cone ranges with non-empty
interiors, then they are lattice aVn-s.

\end{theorem}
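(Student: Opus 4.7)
The plan is to show that the hypotheses force $K^Q=-K$; once this is established, Theorem~\ref{foo} applied to $Q$ gives that $K$ is a lattice cone and $Q$ is its lattice aVn, while Theorem~\ref{foo} applied to $I-Q$ (whose cone range is $K^Q$, and for which $(K^Q)^{I-Q}=K$ by Proposition~\ref{stand}, so the condition ``$(K^Q)^{I-Q}=-K^Q$'' becomes exactly $K=-K^Q$) gives that $I-Q$ is the lattice aVn of $K^Q$.

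First I will combine the two subadditivities into a single ``defect'' inclusion. Since $Q$ is subadditive with respect to $\leq_K$, the quantity $D(x,y):=Qx+Qy-Q(x+y)$ lies in $K$ for all $x,y\in H$. The subadditivity of $I-Q$ with respect to $\leq_{K^Q}$ says $(I-Q)x+(I-Q)y-(I-Q)(x+y)\in K^Q$, and a direct computation shows this quantity equals $-D(x,y)$. Hence
\[
D(x,y)\in K\cap(-K^Q)\qquad\text{for all }x,y\in H.
\]

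Next, to show $K^Q\subset-K$, fix $y\in K^Q\setminus\{0\}$ and pick $e\in\intr K$. For $\delta>0$ small enough, $e+\delta y\in K$; then $Qe=e$, $Q(\delta y)=0$ and $Q(e+\delta y)=e+\delta y$, so the defect identity collapses to $-\delta y=D(e,\delta y)\in K$, giving $y\in -K$. Symmetrically, using $\intr(K^Q)$ and the dual inclusion $-D(x,y)\in K^Q\cap(-K)$ with the test elements $f\in\intr(K^Q)$ and $\delta k$ (for $k\in K$ and $\delta>0$ chosen so that $f+\delta k\in K^Q$) yields $-\delta k\in K^Q$, hence $-K\subset K^Q$. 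Combining the two inclusions gives $K^Q=-K$, and the theorem follows from Theorem~\ref{foo} as above.

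The main conceptual obstacle is the third paragraph: the defect inclusion $D(x,y)\in K\cap(-K^Q)$ holds whenever both $Q$ and $I-Q$ are proper aVn-s, but by itself it is far too weak to force $K^Q=-K$. It is precisely the hypothesis of nonempty interior that lets one perturb an interior point of one cone by an arbitrary element of the other and remain inside the first cone, and this is exactly what converts the generic one-sided defect inclusion into the two-sided containment $K^Q=-K$.
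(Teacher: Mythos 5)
Your proof is correct and takes essentially the same route as the paper's: both arguments exploit the fact that the subadditivity defect of $I-Q$ is the negative of that of $Q$, use the nonempty interiors to perturb an interior point of one cone by a small multiple of an element of the other (so that the retraction and homogeneity properties collapse the defect to an explicit element), deduce $K^Q=-K$, and then invoke Theorem~\ref{foo}. The only cosmetic difference is that the paper records the intermediate statement that the defect set of $Q$ equals all of $K$ (and that of $I-Q$ equals $K^Q$), while you extract the two inclusions $K^Q\subset -K$ and $-K\subset K^Q$ directly.
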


\begin{proof}

Denote by $K$ and $K^Q$ the cone ranges of $Q$ and $I-Q$, respectively.

Since $\intr K^Q \not= \emptyset$, we see that
\begin{equation}\label{mf}
\{Qx+Qy-Q(x+y):\, x,y \in H\} = K.
\end{equation}

Indeed, from the subadditivity of $Q$ the left hand side term in (\ref{mf}) is
obviously subset of $K$

Let us take $x\in K$ and $y\in \intr K^Q$. For a sufficiently large $t>0$ we get 
$$\frac{1}{t}x+y\in K^Q \Rightarrow x+ty\in K^Q,$$
and hence
$$x=Qx+Q(ty)-Q(x+ty),$$
relation showing that $K$ is subset of the set in the left hand side of
(\ref{mf}), and thus the relation (\ref{mf}) follows.

Since $\intr (K^Q)^{I-Q}=\intr K \not=\emptyset$, the
similar relation for $I-Q$ will be
\begin{equation}\label{mf1}
\{(I-Q)x+(I-Q)y -(I-Q)(x+y):\,x,y\in H\}=K^Q.
\end{equation}

Relations (\ref{mf1}) and (\ref{mf}) together yield 
$$K^Q=-\{Qx+Qy-Q(x+y):\,x,y\in H\}=-K.$$

Using now Theorem \ref{foo}, the theorem follows.

\end{proof}

%%%%%%%%%%%%%%%%%%%%%%%%%%%%%%%%%%%%%%%%%%%%%%%%%%%%%%%%%%%%%%%%%%%%%%%%%%%%%%%%%%%%%%%%%%%%%%%%%%%%%%%

\section{Metric projections as asymmetric vector norms}

For a closed, pointed cone in the Hilbert space the distance to the cone is an
assymetric norm (see e. g. Proposition 2.3 in \cite{Conradie2015}).
 
This section answers the question of when
the metric projection onto the cone can be an aVn.

If $(H,\lang, \rang)$ is a real separable Hilbert space, then the projection, or the nearest point
operator onto a nonempty convex set is well defined.

Let $P:H\to K$ be the \emph{projection mapping} onto the closed  cone $K$, that is, the
mapping defined by  
$$Px=\argmin \{\|x-y\|:y\in K\}.$$ 

The following theorem is proved in \cite{Moreau1962}.

\begin{theorem} [Moreau]
	Let $H$ be a Hilbert space, $K,L\subset H$ two mutually polar closed 
	convex cones in $H$. Then, the following statements are equivalent:
	\begin{enumerate}
		\item $z=x+y,~x\in K,~y\in L$ and $\lang x,y\rang=0$,
		\item $x=P_K(z)$ and $y=P_L(z)$
	\end{enumerate}
\end{theorem}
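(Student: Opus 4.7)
The plan is to reduce both implications to the standard variational characterization of the metric projection onto a closed convex cone, namely: $x=P_K(z)$ if and only if $x\in K$, $z-x\in K^\circ$, and $\lang z-x,x\rang=0$. Once this characterization is available, both directions become short verifications. I would also rely on the Farkas lemma cited in the preliminaries to have $L^\circ=K$ whenever $L=K^\circ$.

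For the implication $(2)\Rightarrow (1)$, assume $x=P_K(z)$ and $y=P_L(z)$. Applied to $P_K$, the characterization gives $z-x\in K^\circ=L$ and $\lang z-x,x\rang=0$. Setting $y':=z-x$, one has $z=x+y'$ with $x\in K$, $y'\in L$, and $\lang x,y'\rang=0$. The remaining task is to identify $y'$ with $y$. Applying the same characterization to $P_L$ and using $L^\circ=K$, the conditions to check are $y'\in L$, $z-y'=x\in K=L^\circ$, and $\lang z-y',y'\rang=\lang x,y'\rang=0$, all of which hold, so $y'=P_L(z)=y$.

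For the implication $(1)\Rightarrow (2)$, assume $z=x+y$ with $x\in K$, $y\in L$, and $\lang x,y\rang=0$. For any $u\in K$, I expand
\begin{equation*}
\|z-u\|^2=\|(z-x)+(x-u)\|^2=\|y\|^2+2\lang y,x-u\rang+\|x-u\|^2.
\end{equation*}
Since $\lang y,x\rang=0$ and $\lang y,u\rang\leq 0$ (because $y\in L=K^\circ$ and $u\in K$), it follows that $\|z-u\|^2\geq \|y\|^2+\|x-u\|^2\geq\|z-x\|^2$, whence $x=P_K(z)$. The identity $y=P_L(z)$ is obtained by the symmetric argument, swapping $K$ with $L$ and using $L^\circ=K$.

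The only non-trivial step is the cone-projection characterization itself, so I expect that to be the main obstacle to a fully self-contained exposition. It can be obtained quickly from the general variational inequality $\lang z-P_Cz,u-P_Cz\rang\leq 0$ for $u\in C$: specializing to $C=K$ and taking $u=2P_Kz$ and $u=0$ yields the orthogonality $\lang z-P_Kz,P_Kz\rang=0$, and then the variational inequality with arbitrary $u\in K$ forces $z-P_Kz\in K^\circ$. Everything else in the proof is routine inner-product algebra.
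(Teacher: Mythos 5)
Your proof is correct. Note, however, that the paper does not prove this statement at all: it is quoted as a classical result with the attribution ``The following theorem is proved in \cite{Moreau1962}'', so there is no internal argument to compare against. What you give is the standard self-contained proof: the variational characterization $x=P_K(z)\iff x\in K,\ z-x\in K^\circ,\ \lang z-x,x\rang=0$ (correctly derived from the projection inequality by testing with $u=0$ and $u=2P_Kz$), the bipolar identity $L^\circ=K$ from the Farkas lemma, and then two routine verifications. The inner-product expansions are right, the strict inequality $\|z-u\|^2\ge\|y\|^2+\|x-u\|^2$ even gives uniqueness of the minimizer directly, and the identification $y'=z-x=P_L(z)$ in the direction $(2)\Rightarrow(1)$ is handled cleanly. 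In short: a valid proof of a theorem the paper only cites.
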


Let us denote by $P$ the projection onto $K$. Since $K$ and $K^\circ$ are
mutually polar (Farkas lemma), we have from Moreau's theorem the identity

\begin{equation}\label{mor}
x=Px+(I-P)x \quad \textrm{with} \quad \lang Px, (I-P)x \rang =0,
\end{equation}
and the important consequence, that if $x=u+v$ with $u\in K$
and $v\in K^\circ$ and $\lang u,v\rang =0$, then we must have $u=Px$
and $v=(I-P)x.$

From (\ref{mor}) $Px=0$ if and only if $x\in (I-P)H=K^\circ= K^P$ and thus

\begin{equation}\label{ppro}
P(I-P)=0,
\end{equation}
relation what shows that the retraction $P$ (and its complement $I-P$)
satisfies condition (\ref{pp}) in the definition of a proper aVn-s.

Hence the question of when $P$ can be a proper aVn reduces to the 
characterization of the cone $K$ for which $P$ is subadditive. (The rest of conditions
in the definition of an aVn are trivially satisfied.) We have 
simply formulated characterizations, but whose verification imply
long and sophisticated proofs going back to those in \cite{IsacNemeth1986}. 

The problem of when the metric projection $P$ onto a cone is an aVn  and of when
$P$ and $I-P$ are both aVn-s is completely
settled by the following theorems which are
simple transliterations of our earlier results
using the terminology introduced in this note.

Gathering results from the Theorem 5.2 in \cite{NemethNemeth2012} and Theorem 3 in 
\cite{Nemeth2012d} we have:

\begin{theorem}\label{fooo}

If $(H,\lang, \rang)$ is a separable Hilbert space, $K\subset H$ is
a closed, normal generating cone, and $P$ is the projection onto $K$,
 then the following assertions are equivalent:
\begin{enumerate}
\item $P$ is a proper aVn; 
\item $K$ is a lattice cone and the operator $I-P$ is isotone with respect to the order engendered 
by $(I-P)H$;
\item If $\ddm H=m$ then conditions 1 and 2 are equivalent with
$$K= \{t^1x_1+...+t^mx_m:\,t^i\in R_+,\,i=1,...,m\},$$
where $x_1,...,x_m$ are linearly independent vectors in $H$ with
the property $\lang x_i,x_j\rang \leq 0$ whenever $i\not=j.$
\end{enumerate}

\end{theorem}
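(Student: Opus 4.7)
The plan is to exploit the fact that for the metric projection $P$ onto a closed cone $K$ in a Hilbert space, most of the aVn axioms are automatic, so that condition~(1) reduces to subadditivity of $P$; the remainder then invokes structure theorems relating subadditivity of a projection to the latticial character of $K$. First I would verify these automatic axioms: positive homogeneity of $P$ is immediate from the cone property, the retraction property $P|_K=\mathrm{id}_K$ is built in, and properness $P(I-P)=0$ is exactly the Moreau identity~(\ref{ppro}). Axiom~4 follows from pointedness of $K^\circ$ (which holds because $K$ is generating): $Px=P(-x)=0$ forces $x,-x\in K^\circ$, hence $x=0$. Thus~(1) is equivalent simply to subadditivity of $P$, the only substantive property to exploit or establish.

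The key algebraic identity linking the two sides of (1)$\Leftrightarrow$(2) is
\[
Px+Py-P(x+y)\;=\;(I-P)(x+y)-(I-P)x-(I-P)y,
\]
obtained by substituting Moreau's orthogonal decomposition $x=Px+(I-P)x$. For (2)$\Rightarrow$(1), assuming $K$ is lattice and $I-P$ (which by Moreau is the projection onto $K^\circ$) is $\leq_{K^\circ}$-isotone, I would combine this identity with the lattice structure of $K$ to show the right-hand side lies in $K$, giving subadditivity of $P$. For (1)$\Rightarrow$(2), subadditivity of $P$ together with pointedness of $K^\circ=(I-P)H$ makes $I-P$ sharp; showing in addition that subadditivity forces $P$ to be $\leq_K$-isotone allows Theorem~\ref{sanyi1} to apply and yields that $K$ is a lattice cone. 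The $\leq_{K^\circ}$-isotonicity of $I-P$ then follows by the symmetric role played by $I-P$ as the metric projection onto $K^\circ$.

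For the finite-dimensional equivalence with~(3), Youdine's theorem (Theorem~\ref{Yu}) converts ``$K$ lattice in $\R^m$'' into ``$K$ simplicial'', so one writes $K=\{t^1x_1+\dots+t^m x_m:\,t^i\in\R_+\}$ for linearly independent $x_i$. The remaining task is to pin down when $I-P$ is $\leq_{K^\circ}$-isotone for such a simplicial cone; a direct computation in the style of Isac-N\'emeth~\cite{IsacNemeth1986} shows this occurs precisely when $\langle x_i,x_j\rangle\leq 0$ for $i\neq j$, in which case $P$ admits an explicit positive-part formula with respect to the generators, making subadditivity manifest.

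The principal obstacle is the implication (1)$\Rightarrow$(2): extracting the lattice structure of $K$ from the single inequality of subadditivity is not formal, relying critically on Hilbert-space geometry (Moreau's theorem) together with Theorem~\ref{sanyi1} bridging isotone retractions with lattice cones. This is exactly why the paper frames its argument as a ``transliteration'' of the authors' earlier results in~\cite{NemethNemeth2012} and~\cite{Nemeth2012d}; the heavy lifting has already been done there, and what remains is to translate the conclusions into the aVn language introduced in this note.
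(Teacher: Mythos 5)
The paper does not actually prove Theorem \ref{fooo}: it is presented as a ``transliteration'' of Theorem 5.2 of \cite{NemethNemeth2012} and Theorem 3 of \cite{Nemeth2012d}, the only argument supplied in the text being the preceding discussion (Moreau's decomposition gives $P(I-P)=0$, and every aVn axiom except subadditivity is automatic for the metric projection). Your reduction of condition 1 to subadditivity of $P$, and your identity $Px+Py-P(x+y)=(I-P)(x+y)-(I-P)x-(I-P)y$, reproduce that part faithfully, and your closing paragraph correctly identifies that the substance lives in the cited duality theorems. To that extent you and the paper coincide.

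However, the one concrete step you do propose for $(1)\Rightarrow(2)$ is wrong: subadditivity of $P$ does \emph{not} force $P$ to be $\leq_K$-isotone, so you cannot apply Theorem \ref{sanyi1} to $K$ with the retraction $P$. Take $K=\cone\{(1,0),(-1,1)\}\subset\R^2$; its generators satisfy $\lang x_1,x_2\rang=-1\leq 0$, so $K$ is coisotone and $P$ is subadditive by the very theorem under discussion, yet $(-1,0)\leq_K(0,0)$ while $P(0,0)-P(-1,0)=(0,0)-(-\tfrac12,\tfrac12)=(\tfrac12,-\tfrac12)\notin K$. (The paper's closing proposition is devoted precisely to coisotone cones that are not selfdual, for which $L\leq P$ and $P\neq L$.) What subadditivity actually yields is $x\leq_K y\Rightarrow (I-P)x\leq_K(I-P)y$, i.e.\ isotonicity of $I-P$ with respect to $\leq_K$ on both sides --- not the $\leq_{K^\circ}$-isotonicity demanded by item 2. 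Passing from the former to the latter is exactly the nontrivial duality of \cite{NemethNemeth2012} and \cite{Nemeth2012d}; once it is available, Theorem \ref{sanyi1} must be applied to $K^\circ$ with the retraction $I-P$ (whose complement $P$ is sharp because $K$ is pointed), which first gives that $K^\circ$ is latticial, and a further argument (in finite dimension, Theorem \ref{Yu} plus duality of simplicial cones) is needed to conclude that $K$ itself is latticial. As written, your sketch for $(1)\Rightarrow(2)$ would not close.
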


The cone satisfying the conditions of this theorem
is called \emph{coisotone}.
If $P$ is subadditive, that is, its cone range is coisotone, it
does not follow the same condition on $I-P$. This can happen only 
for a very restrictive condition on $K$.

Using Theorem 3 in \cite{Nemeth2003} and the above theorem we 
can conclude the following result:

\begin{theorem}\label{Hlatt}

For the separable Hilbert space ordered by the closed, generating normal cone $K\subset H$
and $P$ the projection onto $K$, the following assertions are equivalent:
\begin{enumerate}

\item  $P$ and $I-P$ are proper aVn-s;

\item $P$ is subadditive and isotone retraction;

\item $H$ is a Hilbert vector lattice;

\item $K$ is a selfdual lattice cone;

\item $K$ is a lattice cone and $P=L$ with $L$
the lattice aVn of $K$.

\item If $\ddm H=m$, then the above conditions are equivalent with

$$K= \{t^1x_1+...+t^mx_m:\,t^i\in R_+,\,i=1,...,m\},$$

where $x_1,...,x_m$ are linearly independent vectors in $H$ with
the property $\lang x_i,x_j\rang = 0$ whenever $i\not=j.$

\end{enumerate}

\end{theorem}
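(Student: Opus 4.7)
The plan is to split the six-way equivalence into two pieces: I would cite Theorem 3 of \cite{Nemeth2003} as a black box providing the chain $2\Leftrightarrow 3\Leftrightarrow 4\Leftrightarrow 5$ (it essentially identifies self-dual lattice cones, Hilbert vector lattices, and the subadditive--isotone retraction property of the metric projection), and then establish the remaining two links $1\Rightarrow 2$ and $4\Rightarrow 1$ using only Theorem \ref{fooo}, Moreau's decomposition, and Proposition \ref{prop1}.

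The first preparatory observation is that by Moreau's identity (\ref{mor}) the cone range of $I-P$ is $K^\circ=K^P$, and that $K^\circ$ inherits from $K$ the properties of being closed, generating, pointed, and normal. For $1\Rightarrow 2$, I would apply Theorem \ref{fooo} with $Q=P$ to deduce that $K$ is a lattice cone and $I-P$ is isotone with respect to the $K^\circ$-order, and then apply the same theorem with $I-P$ in place of $P$ and $K^\circ$ in place of $K$ to force $P=I-(I-P)$ to be isotone with respect to the $K$-order. Subadditivity of $P$ is axiom~3 of an aVn, so condition~2 follows. For $4\Rightarrow 1$: self-duality gives $-K=K^\circ=K^P$; invoking the equivalence $4\Leftrightarrow 2$ (from the cited theorem) to see that $P$ is subadditive and hence a proper aVn (axioms 1 and 2 are routine, axiom 4 uses pointedness of $K^\circ$, and (\ref{pp}) is (\ref{ppro})), Proposition \ref{prop1} applied to $Q=P$ then yields that $I-P$ is also proper.

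For the finite-dimensional item 6, Theorem \ref{fooo}(3) already supplies $K=\cone\{x_1,\dots,x_m\}$ with $\langle x_i,x_j\rangle\leq 0$ for $i\neq j$; combining this with self-duality $K=K^*$, a standard argument on dual bases (the dual basis to $\{x_i\}$ must lie in $K=K^*$ and hence be a positive rescaling of $\{x_i\}$) forces $\langle x_i,x_j\rangle=0$ for $i\neq j$. The converse, that orthogonality of the generators produces a self-dual lattice cone, is a direct verification via an orthogonal change of coordinates to $\R^m_+$. The hard part of the whole theorem is internal to the cited Theorem 3 of \cite{Nemeth2003}: promoting ``$P$ is a subadditive isotone metric projection onto a lattice cone'' to ``$K$ is self-dual'' is the genuinely delicate step, and reconstructing it from the tools developed in the present note would require a substantial additional argument combining the Moreau orthogonal decomposition with the lattice decomposition.
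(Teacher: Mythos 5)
Your proposal leans on Theorem 3 of \cite{Nemeth2003} for the entire chain $2\Leftrightarrow 3\Leftrightarrow 4\Leftrightarrow 5$, but the paper is explicit that the cited theorem supplies \emph{only} the equivalence $2\Leftrightarrow 3$; the links involving self-duality (item 4) and the identification $P=L$ (item 5) are proved in the present note, not imported. This is a genuine gap: the implications $2\Rightarrow 4$ and $4\Rightarrow 5$ are exactly what you declare to be ``the genuinely delicate step'' requiring ``a substantial additional argument,'' and you leave them unproved. In fact they are short, given the tools already at hand. For $2\Rightarrow 4$: if $P$ is isotone, then for $x\in K$ one has $-x\leq_K 0$, hence $0\leq P(-x)\leq P(0)=0$, so $P(-x)=0$, i.e.\ $-x\in K^P=K^\circ=-K^*$ by Moreau's decomposition, and thus $K\subseteq K^*$; Theorem \ref{fooo} then says that $I-P$ (the projection onto $K^\circ$) is also subadditive and isotone with respect to the $K^\circ$-order, and the same two-line argument applied to it gives $K^*\subseteq K$, whence $K=K^*$. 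For $4\Rightarrow 5$: self-duality gives $K^P=K^\circ=-K$, and Theorem \ref{foo} (the $K^Q=-K$ characterization of the lattice aVn) immediately yields that $K$ is a lattice cone and $P=L$.

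Your remaining steps are sound and close to the paper's: $1\Rightarrow 2$ by applying Theorem \ref{fooo} to both $P$ and $I-P$ is exactly the paper's argument, and your route $4\Rightarrow 1$ via Proposition \ref{prop1} is a legitimate alternative to the paper's $5\Rightarrow 1$ (where one simply observes that a lattice aVn and its complement are both proper). The dual-basis argument for item 6 also works; the paper instead invokes Youdine's Theorem \ref{Yu} together with the standard fact that self-dual simplicial cones are orthogonal images of $\R^m_+$. But as written the proposal does not prove the theorem, because the passage from the retraction properties of $P$ to self-duality of $K$ is neither correctly attributed to the citation nor reconstructed.
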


\begin{proof}

In the cited paper only the equivalence $2 \Leftrightarrow 3$ was proved.
Accepting it, we have to verify the rest of the equivalences.

$2\Rightarrow 4$.

Since $P$ is isotone, it follows that $K$ is subdual.

Indeed, for $x\in K$, $-x\leq_K 0$ and from the isotonicity of $P$ it holds
$0\leq P(-x)\leq P0=0.$ From (\ref{mor}) then $-x\in K^\circ =-K^*$,
and we have $K\subset K^*$.

From Theorem \ref{fooo}, $I-P$ is subadditive and  isotone too. 
 A similar argument for $K^* $ yields 
$K^* \subset (K^*)^*=K$, and item 4 is proved.

$4\Rightarrow 5$. 

Since $K=-K^\circ =-K^P$, we have from Theorem \ref{foo} that
$K$ is a lattice cone and $P=L.$

$5 \Rightarrow 1$. 

Since $P$ is a lattice aVn, it follows that $P$
and $I-P$ are both proper aVn-s.

$1\Rightarrow 2$. 

From Theorem \ref{fooo}, $P$ and $I-P$ are
booth subbadditive and isotone.

If $\ddm H=m$, then  a lattice cone $K$ must be simplicial by Theorem \ref{Yu}.
It is well known that the 
selfdual simplicial cones are isomorphic with $\R^m_+.$
This proves that in finite dimension we have the
equivalences of the statements at items 1--6.

\end{proof}

\begin{proposition}

Let $K$ be a not selfdual coisotone cone in  $H$.
Then the aVn $R=\lambda L+\mu P$ with $\lambda,\,\mu \in (0,1),\,\lambda +\mu =1$
cannot be proper.

\end{proposition}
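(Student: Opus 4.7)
The plan is to apply Proposition \ref{prop2} with the two proper aVn-s $P$ (the metric projection) and $L$ (the lattice aVn of $K$), both with cone range $K$. So I need to verify the three hypotheses of Proposition \ref{prop2}: (i) both $P$ and $L$ are proper aVn-s with cone range $K$; (ii) they are comparable in the order $\leq_K$; and (iii) their $R$-polars differ.

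For (i): since $K$ is coisotone, Theorem \ref{fooo} gives that $P$ is a proper aVn and that $K$ is a lattice cone, so $L$ is well-defined; $L$ is a proper aVn by the last part of Remark \ref{noproper} (condition 7 in Example \ref{latt} is exactly properness of $L$). For (iii): by Moreau's decomposition \eqref{mor}, $K^P=K^\circ=-K^*$, while $K^L=\{x\in H:x^+=0\}=-K$. If these coincided, then $K^*=K$, i.e.\ $K$ would be selfdual, contradicting the hypothesis; hence $K^P\neq K^L$.

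The step requiring a little more care is (ii), the comparison $P\leq L$. Here I would recycle the argument from part (a) of the proof of Theorem \ref{foo}: for any $v\in K^P$ and any $x\in H$, subadditivity of $P$ combined with $Pv=0$ yields $Px\leq P(x-v)+Pv=P(x-v)$, so $Px\leq Py$ whenever $y-x\in -K^P$. Now I would invoke subduality of coisotone cones (established in the proof $2\Rightarrow 4$ of Theorem \ref{Hlatt}): coisotonicity of $K$ implies $K\subseteq K^*=-K^\circ=-K^P$, so the previous implication gives that $P$ is isotone with respect to $\leq_K$. Applying $P$ to the inequality $x\leq Lx$ and using $P(Lx)=Lx$ (since $Lx\in K$ and $P$ is a retraction onto $K$) yields $Px\leq Lx$ for every $x\in H$, i.e.\ $P\leq L$ pointwise in $K$.

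With (i)--(iii) established, Proposition \ref{prop2} applied to $Q:=P$ and $R:=L$ shows that for every $\lambda,\mu\in(0,1)$ with $\lambda+\mu=1$ the convex combination $\mu P+\lambda L=\lambda L+\mu P$ fails to be proper, which is the claim. I expect the only subtle point to be the isotonicity of $P$, which is not an immediate consequence of the definition of an aVn but rests on the coisotonicity of $K$ via its subduality; the rest of the argument is bookkeeping with the identities $K^P=-K^*$ and $K^L=-K$.
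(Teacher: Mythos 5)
Your overall strategy coincides with the paper's: both reduce the statement to Proposition \ref{prop2} applied to the pair $L,P$, and your steps (i) and (iii) are correct ($K^P=K^\circ=-K^*$, $K^L=-K$, and these differ precisely because $K$ is not selfdual). However, step (ii) contains a genuine error: the inequality you establish, $P\leq L$, is false, and the correct comparison goes the other way, $L\leq P$. The flaw is your claim that coisotonicity of $K$ yields subduality $K\subseteq K^*$. In the proof of $2\Rightarrow 4$ of Theorem \ref{Hlatt} that inclusion is derived from the \emph{additional} hypothesis that $P$ is isotone with respect to $\leq_K$; coisotonicity alone only says that $I-P$ is isotone with respect to the order engendered by $K^\circ$. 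In fact coisotone cones satisfy the reverse inclusion $K^*\subseteq K$ (the dual of a coisotone cone is an isotone projection cone, hence subdual, so $K^*\subseteq (K^*)^*=K$). Moreover, if $P$ were both subadditive and $\leq_K$-isotone, Theorem \ref{Hlatt} would force $K$ to be selfdual, contradicting the hypothesis of the proposition — so your step (ii) cannot be repaired as stated. A concrete counterexample to $P\leq L$: since $K$ is not selfdual, $-K\setminus K^\circ\neq\emptyset$; for $x$ in this set one has $Lx=x\vee 0=0$ but $Px\neq 0$, so $Px\leq Lx$ would give $Px\in K\cap(-K)=\{0\}$, a contradiction.

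The paper's argument for the comparison is: $x-Px\in K^\circ=-K^*\subseteq -K$, hence $x\leq Px$ for every $x$; applying the isotone operator $L$ and using $L(Px)=Px$ (as $Px\in K$) gives $Lx\leq Px$, i.e.\ $L\leq P$. Once this is in hand, Proposition \ref{prop2} applies with $Q:=L$ and $R:=P$, and since the family of strict convex combinations $\lambda L+\mu P$ is the same regardless of which operator plays which role, the conclusion you want still follows. So the defect is localized and reparable, but as written your proof is not correct.
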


\begin{proof}

Let $K$ be a coisiotone cone. We show that $L\leq P$.

From Theorem \ref{fooo}, $I-P$ is isotone with respect to the order engendered
by $K^\circ$. Since by Theorem \ref{fooo} $K^\circ$  is an isotone projection cone,  
one has $-K^\circ =K^* \subset K^{**}=K.$ Then $x-Px \in K^\circ \subset -K$
and thus $x\leq Px$. Since $L$ is isotone, it follows

$$Lx\leq LPx=Px,\,\forall x\in X.$$

\vspace{3mm}

 Since $K$ is not selfdual, by Theorem \ref{Hlatt} we must have that $L \leq P,\,K^L\not= K^P,$ and
we can apply Proposition \ref{prop2} to conclude the proof.

\end{proof}

%\textbf{Acknowledgment:}
 
\vspace{2mm}

\end{document}